\newtheorem{lemma}{Lemma}[section]
\newtheorem{conjecture}{Conjecture}
\newtheorem{theorem}{Theorem}
\theoremstyle{definition}
\newtheorem{remark}[lemma]{Remark}
\begin{document}

\def\mainfile{}
\newcommand{\eps}{{\varepsilon}}
\newcommand{\C}{{\mathbb C}}
\newcommand{\Q}{{\mathbb Q}}
\newcommand{\R}{{\mathbb R}}
\newcommand{\Z}{{\mathbb Z}}
\newcommand{\RP}{{\mathbb {RP}}}
\newcommand{\CP}{{\mathbb {CP}}}
\newcommand{\Tr}{\rm Tr}
\newcommand{\g}{\gamma}
\newcommand{\G}{\Gamma}
\newcommand{\e}{\varepsilon}

\title{Remarks on rigidity properties of conics}

\author{
Serge Tabachnikov\footnote{
Department of Mathematics,
Pennsylvania State University,
University Park, PA 16802;
tabachni@math.psu.edu}
}

\date{\today}
\maketitle

\begin{abstract} Inspired by the recent results toward Birkhoff conjecture (a  rigidity property of billiards in ellipses), 
we discuss two rigidity properties of conics. The first one concerns  symmetries of an analog of polar duality associated with an oval, and the second  concerns  properties of the circle map associated with an oval and two pencils of lines.
\end{abstract}

\section{Polar duality} \label{sect:dual}

Given a non-degenerate conic in the projective plane, polar duality associates a line to a point and a point to a line. 
Polar duality preserves the incidence relation between points and lines. See Figure \ref{dual1}. 

\begin{figure}[ht]
\centering
\includegraphics[width=.45\textwidth]{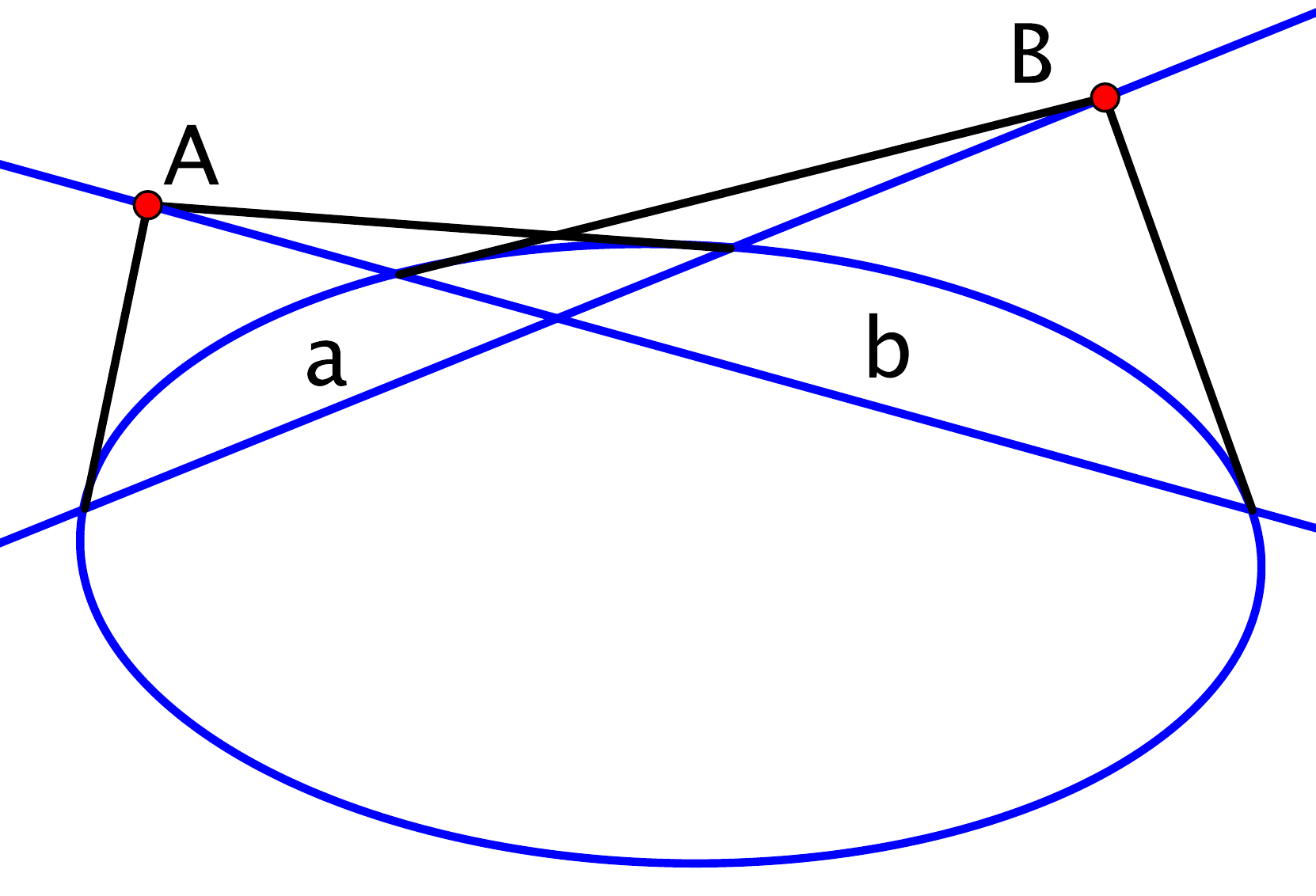}
\caption{Line $a$ is polar dual to point $A$, and line $b$ to point $B$: if $B$ lies on $a$, then $b$ contains $A$.}	
\label{dual1}
\end{figure}

Let $\g$ be a germ of a smooth convex curve in the plane. If point $A$ lies on the concave side of $\g$ and is sufficiently close to $\g$, then one can draw two tangent lines to $\g$, connect the tangency points, and declare the resulting line $a$ ``dual" to point $A$. When does this ``duality" preserve the incidence relation between points and lines?

Let $B$ be a point on line $a$ that lies on the concave side of $\g$ and is sufficiently close to $\g$, so that this construction can be repeated, yielding the line $b$. 

\begin{theorem} \label{thm:symm}
Assume that for all admissible choices of points $A$ and $B$ as described above, the line $b$ passes through point $A$. Then $\g$ is an arc of a conic.
\end{theorem}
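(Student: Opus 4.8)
The plan is to show that the ``duality'' $A\mapsto a$ is, in fact, the restriction of an honest projective polarity, and then to recover $\gamma$ as an arc of the associated conic. Write $b_A$ for the line dual to an admissible point $A$ (so $b_A$ is the line called $a$ in the statement), and recall that $b_A$ is a short chord of $\gamma$ whose endpoints are the tangency points of the two tangent lines from $A$. Conversely, any sufficiently short non-tangent chord $\ell$ of $\gamma$ near the relevant arc is of the form $b_N$: if $P,Q$ are the endpoints of $\ell$, then $N:=T_P\cap T_Q$ (the intersection of the tangents to $\gamma$ at $P$ and $Q$) is again admissible, the two tangents from $N$ are exactly $T_P$ and $T_Q$ (a point on the concave side, close to $\gamma$, sees exactly two tangents of the arc), and $b_N=PQ=\ell$. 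Thus $A\mapsto b_A$ is a smooth injection from admissible points to short chords, with inverse ``$\ell\mapsto$ pole of $\ell$''.

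The hypothesis says exactly that the relation ``$B\in b_A$'' is symmetric on admissible pairs $(A,B)$. The first step is to turn this into a collinearity/concurrency statement. Let $A_1,A_2,A_3$ be admissible points lying on a common short chord $m$ of $\gamma$, and let $N$ be the pole of $m$, so $m=b_N$. Then $A_i\in b_N$ for each $i$, hence by symmetry $N\in b_{A_i}$ for each $i$, i.e.\ the chords $b_{A_1},b_{A_2},b_{A_3}$ all pass through $N$. Equivalently: for each admissible $B$, the locus of poles of chords of $\gamma$ through $B$ --- a priori merely an arc joining the two tangency points of $B$ --- is forced to be the straight segment $b_B$ between those tangency points. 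This is the geometric heart of the matter.

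Next I would promote this to the assertion that $A\mapsto b_A$ extends to a projective correlation of $\RP^2$. A smooth injective map of plane domains that carries (short) collinear triples to concurrent triples of lines --- equivalently, is geodesic for the flat projective structure along an open cone of directions at each point --- is the restriction of a projective correlation; this is a local, smooth version of the fundamental theorem of projective geometry. A symmetric correlation is a polarity, hence the polarity with respect to a (possibly degenerate) conic $C$, so $b_A=\mathrm{pol}_C(A)$ for all admissible $A$. It remains to identify $\gamma$. As an admissible point $A$ tends to a point $p\in\gamma$ from the concave side, both tangents from $A$ tend to the tangent line $T_p$, so $b_A\to T_p$; on the other hand $b_A=\mathrm{pol}_C(A)\to\mathrm{pol}_C(p)$ by continuity of the polarity. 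Therefore $\mathrm{pol}_C(p)=T_p\ni p$, so $p$ is self-conjugate with respect to $C$, i.e.\ $p\in C$. Hence $\gamma\subset C$, and since $\gamma$ is a smooth convex arc, $C$ is a nondegenerate real conic and $\gamma$ is an arc of it.

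The step I expect to be the main obstacle is the promotion to a genuine correlation. The clean ``collinear $\Rightarrow$ concurrent'' statement is available only for triples of admissible points lying on a chord of $\gamma$, whereas a line through two admissible points need not meet $\gamma$ at all (for $y=\tfrac12x^2$, lines just below the parabola are disjoint from it); one must check that the chords through a single admissible point already sweep out enough directions to force projectivity, and handle the domains of definition with care. If this becomes awkward, there is a more computational fallback: in affine-normalized local coordinates $y=\tfrac12x^2+\cdots$ at a point of $\gamma$, expand the condition ``the poles of the chords through $B$ are collinear'' in Taylor series in the position of $B$, and verify order by order that $\gamma$ must satisfy the classical fifth-order differential equation cutting out conics (equivalently, that the affine curvature of $\gamma$ is constant). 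That route is longer but entirely routine once set up.
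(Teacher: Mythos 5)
Your reduction of the hypothesis to a concurrency statement is correct and genuinely different from the paper's argument: the symmetry of the relation $B\in b_A$ does force the poles of all short secant lines through an admissible point $B$ to lie on the single line $b_B$, and your endgame (a symmetric correlation is a polarity with respect to a conic $C$; the limit $b_A\to T_p$ as $A\to p\in\gamma$ makes every point of $\gamma$ self-conjugate, hence $\gamma\subset C$, with degeneracy of $C$ excluded by convexity) is sound. The paper proceeds quite differently: it normalizes the affine parametrization ($\det(\gamma',\gamma'')=1$, affine curvature rescaled to $\pm1$ at a point), Taylor-expands the incidence condition in $\varepsilon$, and shows that the induced map $a\mapsto b(a,\varepsilon)$ can be an involution only if $k'(t)=0$ at every point, i.e.\ only if the affine curvature is constant, so that $\gamma$ is a conic; your ``computational fallback'' is essentially that proof, but you do not carry it out either.

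The genuine gap is the step you yourself flag: promoting the chord-only concurrency statement to a projective correlation. The theorem you invoke (the local fundamental theorem of projective geometry in correlation form) needs \emph{all} collinear admissible triples to go to concurrent lines, whereas your construction controls only triples lying on secant lines of $\gamma$; through a fixed admissible point $A$ the secant directions form a small (two-component) open set of directions, not all directions, so the theorem does not apply as stated and no substitute argument is given. The gap can be closed, but it requires an extra idea beyond what you wrote: dualize $A\mapsto b_A$ by a fixed auxiliary polarity to get a smooth point map $\phi$; the condition that $\phi$ carries the segment through $A$ in direction $v$ into a line implies $\det\bigl(D^2\phi_A(v,v),\,D\phi_A(v)\bigr)=0$, a homogeneous cubic in $v$, so vanishing on a nonempty open set of directions at each point forces it to vanish for all $v$; hence $\phi$ maps every line germ to a line germ and is locally projective, which yields the correlation, and the rest of your argument then goes through. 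As written, however, this central step is an unproved assertion, so the proposal does not yet establish the theorem.
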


\begin{proof} The proof is computational, with the heavy lifting done by Mathematica.

We give the curve affine parameterization, $\g(t)$, such that $\det(\g',\g'')=1$ for all $t$. This implies that 
\begin{equation} \label{curv}
\g'''(t)=-k(t)\g'(t),
\end{equation}
 where the function $k$ is called the affine curvature. Affine curvature is constant if and only if the curve is a conic. See, e.g., \cite{Si}.

Scale and reparameterize the curve: $\G(t)=r\g(ct)$. If $c^2r^3=1$, then $\det(\G',\G'')=1$, and $\G'''=r^2k\G'$. Thus, by scaling and reparameterizing the curve, one can scale the affine curvature by a constant positive factor. 

Fix $t$ and consider a neighborhood of point $\g(t)$. We assume that $k(t)\neq 0$: indeed, if the affine curvature is identically zero, the curve is a parabola, and we are done. Using the above described rescaling, we may assume that $k(t)=\pm1$. In what follows we consider the case $k(t)=1$; the other case is  analogous. 

Chose the coordinates so that 
$$
\g(t)=(0,0), \g'(t)=(1,0), \g''(t)=(0,1).
$$
 Then $\g'''(t)=(-1,0)$. Differentiating (\ref{curv}) once and twice and using $k(t)=1$, we find that 
 $$
 \g^{\rm iv}(t)=-(p,1), \g^{\rm v}(t)=(1-q,-2p),
 $$
  where $p=k'(t), q=k''(t)$. 

We want to show that $p=0$. Since $t$ is arbitrary, this would imply that $\g$ has constant affine curvature, and hence it is a conic. 

Fix $\e$ and consider three points of the curve $\g(t-\e),\g(t+a\e),\g(t+\e)$. Construct the fourth point $\g(t-b\e)$ as shown in Figure \ref{curve}. 
\begin{figure}[ht]
\centering
\includegraphics[width=.75\textwidth]{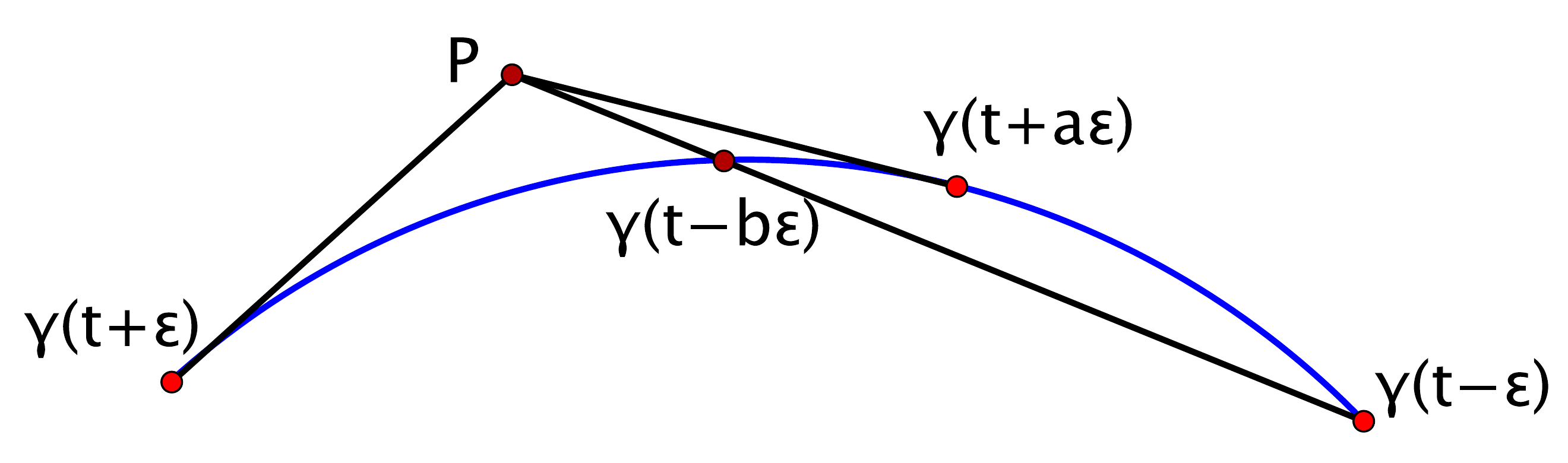}
\caption{Constructing $b(a,\e)$.}	
\label{curve}
\end{figure}

This defines the function $b(a,\e)$.  Our assumption that the ``duality" preserves the incidence relation implies that the map $f:a \mapsto  b(a,\e)$ is an involution for every $a\in (-1,1)$.
We shall calculate the Taylor expansion of $b(a,\e)$ in $\e$ up to $\e^3$ and show that if $f$ is an involution, then $p=0$.

It is convenient to lift points from the plane to $\R^3$ by adding 1 as the third coordinate, that is, to consider $\R^2$ as an affine plane in $\R^3$ at height 1. In $\R^3$ one has the duality between 1- and 2-dimensional subspaces given by the dot product. Thus the plane spanned by vectors $u$ and $v$ is identified with its normal $u\times v$.

We use cross-product to find intersection of two lines in the plane (that is, two 2-dimensional subspaces in 3-space) and the line through a pair of points (that is, the plane spanned by two 1-dimensional subspaces in 3-space). 

For example, let $u=(u_1,u_2)$ and $v=(v_1,v_2)$ be two vectors in $\R^2$. Then the line in $\R^2$ through $u$ in the direction $v$ becomes the plane in $\R^3$ spanned by the vectors $(u_1,u_2,1)$ and $(u_1+v_1,u_2+v_2,1)$, which is identified with its normal vector $(u_1,u_2,1)\times (v_1,v_2,0)$.

With these identifications, one has
$$
P= [\g(t+a\e)\times \g'(t+a\e)]\times [\g(t+\e)\times \g'(t+\e)]
$$
and
\begin{equation} \label{eq:cond1}
\det[P,\g(t+b\e),\g(t-\e)]=0.
\end{equation}
Using the properties of cross-product
$$
\det(u,v,w)=(u\times v)\cdot w, (u\times v)\cdot (w\times x)=(u\cdot w)(v\cdot x)-(u\cdot x)(v\cdot w),
$$
we rewrite (\ref{eq:cond1}) as
\begin{equation} \label{eq:cond2}
\begin{split}
&\det[\g(t+a\e),\g'(t+a\e),\g(t+b\e)]\det[\g(t+\e),\g'(t+\e),\g(t-\e)]\\
&=\det[\g(t+a\e),\g'(t+a\e),\g(t-\e)]\det[\g(t+\e),\g'(t+\e),\g(t+b\e)].
\end{split}
\end{equation}

To solve this equation for $b(a,\e)$, we write the terms involved as power series in $\e$:
\begin{equation} \label{eq:Tay}
\begin{split}
&\g(t+a\e)=\\
&\left(a\e-\frac{a^3\e^3}{6}- \frac{a^4\e^4}{24}p+\frac{a^5\e^5}{120}(1-q)+O(\e^6), \frac{a^2\e^2}{2} -\frac{a^4\e^4}{24} -\frac{a^5\e^5}{60}p +O(\e^6),1\right),\\
&\g'(t+a\e)=\\
&\left(1-\frac{a^2\e^2}{2}- \frac{a^3\e^3}{6}p+\frac{a^4\e^4}{24}(1-q)+O(\e^5), a\e -\frac{a^3\e^3}{6} -\frac{a^4\e^4}{12}p +O(\e^5),0\right),
\end{split}
\end{equation}
and likewise for $\g(t\pm\e), \g(t+\e)$, and  $\g(t-b\e)$, where 
$$
b=b_0(a)+\e b_1(a)+\e^2b_2(a)+\e^3b_3(a)+O(\e^4).
$$
 We want to find the functions $b_i(a), i=0,1,2,3$.

Substitute (\ref{eq:Tay}) in (\ref{eq:cond2}) to obtain an equation involving the terms of degrees 4 to 7 in $\e$,  an equation mod $\e^8$. Solving this equation in Mathematica, one finds
\begin{equation*}
\begin{split}
b(a,\e)=-\frac{3a+1}{a+3}
&+\frac{2(a-1)^2(a+1)^2}{3(a+3)^3} \e^2 \\
&+\frac{2(a-1)^2(a+1)^2(2a^2+9a+5)}{15(a+3)^4} p \e^3 
 +O(\e^4).
\end{split}
\end{equation*}
It follows that 
\begin{equation*}
\begin{split}
f(f(a))=a - \frac{(a-1)^2(a+1)^2(a^2+6a+1)}{24(a+3)^2} p \e^3+O(\e^4).
\end{split}
\end{equation*}
Since $f$ is an involution, the cubic in $\e$ term vanishes, and since  $a$ is arbitrary, we conclude that $p=0$, as needed.
\end{proof}

\section{A family of circle maps} \label{sect:maps}

Let $\g$ be an oval. The intersections of $\g$ with a family of parallel lines having direction $u$
define an involution $f_u:\g\to\g$. Let $u$ and $v$ be two directions, and let $F_{u,v}=f_u\circ f_v$. Then $F_{u,v}$ is an orientation-preserving circle homeomorphism. 

This circle map was studied from a surprising variety of perspectives \cite{Ar,GKT,HM,Jo,KT,NT,So}: the Dirichlet problem for a 2nd order hyperbolic PDE, Hilbert's 13th problem, billiards in pseudo-Euclidean geometry, ``chess billiards", etc.

If $\g$ is a circle, then $F_{u,v}$ is a rotation. Since the map $F_{u,v}$ commutes with affine transformations, $F_{u,v}$ is conjugated to a rotation if $\g$ is an ellipse. We repeat here a conjecture made in \cite{GKT}.

\begin{conjecture} \label{conj1}
Suppose that $F_{u,v}:\g\to\g$ is conjugated to a rotation for all pairs of directions $u,v$. Then $\g$  is an ellipse.
\end{conjecture}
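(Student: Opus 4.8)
The statement is a conjecture; what follows is a plan of attack, modeled on the proof of Theorem~\ref{thm:symm}.

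\textbf{Step 1: affine reduction.} Since every $f_u$, and hence every $F_{u,v}$, commutes with affine transformations, and ``conjugate to a rotation'' is an affine-invariant property, one may work in affine arc-length parameterization $\g(t)$ with $\det(\g',\g'')=1$, affine curvature $k$ given by \eqref{curv}; it then suffices to prove that $k$ is constant, a closed convex curve of constant affine curvature being an ellipse. Fix a point $\g(t_0)$ with $k(t_0)\neq0$ (an oval is not a parabola arc, so such points exist, and — arguing as in the proof above — if $k'$ vanishes at every such point then $k$ is a nonzero constant). After the scaling used in the proof of Theorem~\ref{thm:symm}, normalize so that $\g(t_0)=(0,0),\ \g'(t_0)=(1,0),\ \g''(t_0)=(0,1)$ and $k(t_0)=\pm1$, and set $p=k'(t_0)$. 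As before, the goal is to show $p=0$.

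\textbf{Step 2: extracting algebraic identities.} The difficulty, compared with Theorem~\ref{thm:symm}, is that ``$F_{u,v}$ is conjugate to a rotation'' is not a single algebraic identity the way ``$f$ is an involution'' is. Usable identities come from \emph{rational} rotation numbers: whenever $\rho(F_{u,v})=m/n$ one has the exact closure relation $F_{u,v}^{\,n}=\mathrm{id}$, i.e. the ``chess-billiard'' trajectory that alternately follows chords parallel to $v$ and to $u$ closes up after $2n$ reflections. The case $\rho=1/2$ already reads $f_uf_v=f_vf_u$: the hypothesis thus forces, for every pair of directions with $\rho(F_{u,v})=1/2$, a pair of \emph{commuting} chord involutions (for the ellipse these are precisely the conjugate-diameter directions). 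Since $\rho$ is continuous in $(u,v)$ and vanishes on the diagonal, such pairs — and, more generally, pairs with prescribed rational $\rho=m/n$ — form positive-dimensional families. The plan is to substitute the Taylor expansions \eqref{eq:Tay} (the $\R^3$/cross-product formalism of the proof above carries over verbatim) into these closure conditions, expand in a small parameter $\e$ measuring the size of the configuration, let Mathematica solve the resulting systems, and read off $p=0$; since $t_0$ is arbitrary this yields $k'\equiv0$.

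\textbf{Step 3: the main obstacle — localization.} Here the plan runs into the difficulty familiar from Birkhoff-type rigidity: a periodic orbit is a \emph{global} closed polygon, and one cannot isolate a single such relation in a small neighborhood of $\g(t_0)$ the way \eqref{eq:cond2} was localized — a circle homeomorphism with an invariant arc near $\g(t_0)$ has a fixed point there, hence, being conjugate to a rotation, must be the identity. So the local information at $\g(t_0)$ must be squeezed out of a \emph{family} of closure relations by a limiting argument: take $n\to\infty$ along pairs with $\rho(F_{u,v})=1/n$, normalize and expand, and hope that the leading-order constraint degenerates to a differential relation for $k$ at $\g(t_0)$ that forces $p=0$. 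Making this limit rigorous, controlling the $n$-fold iterate uniformly in $n$ (ideally via a closed-form recursion for the $\e$-expansion of the return map), and checking that the family of constraints obtained as $t_0$ and the remaining parameters vary is rich enough to conclude, is where I expect the real work to lie.

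\textbf{Step 4: complementary angles.} Two reformulations may help. First, an orientation-reversing circle involution with two fixed points preserves some measure; if $F_{u,v}$ is conjugate to an \emph{irrational} rotation, its invariant measure $\mu_{u,v}$ is unique, and from $f_uF_{u,v}f_u^{-1}=F_{u,v}^{-1}$ one deduces that $\mu_{u,v}$ is invariant under both $f_u$ and $f_v$. If one could show all these measures coincide, the common measure would be preserved by every $f_u$; the coordinate it defines conjugates the whole family to the ``circle model'' in which the $f_u$ become reflections, and a converse to Poncelet's closure theorem then identifies $\g$ as a conic. The obstruction is precisely the rational-$\rho$ case, where invariant measures are highly non-unique — so this route is complementary to Steps~2--3. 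Second, differentiating $v\mapsto F_{u,v}$ at $v=u$ produces a vector field $X_u$ on $\g$; conjugacy to a rotation of the nearby maps forces $X_u$ to be nowhere vanishing for every $u$, a nontrivial constraint one could try to integrate. Finally, a regularity remark consistent with basing the argument on closure relations: in the analytic category Herman's theorem makes the Diophantine-$\rho$ pairs automatically conjugate to rotations, so the content of the hypothesis is concentrated at rational (and Liouville) rotation numbers.
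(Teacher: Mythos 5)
You have not proved the statement, and you could not be expected to: Conjecture \ref{conj1} is stated in the paper as an open problem (``As far as we know, this conjecture is open''), so there is no proof in the paper to compare against, and your submission is explicitly a plan rather than an argument. The genuine gap is the one you yourself flag in Step 3: the hypothesis ``conjugate to a rotation'' does not localize. The only algebraic identities it yields are global closure relations $F_{u,v}^n=\mathrm{id}$ at rational rotation number, and your proposed way of converting these into the local statement $k'(t_0)=0$ --- expanding an $n$-fold iterate, letting $n\to\infty$, and extracting a leading-order differential constraint --- is not carried out, nor is it clear that it can be: the required uniform control of the iterates and the richness of the resulting family of constraints are exactly the hard part, and nothing in the Taylor machinery of the paper's Theorem 1.1 (which rests on a single pointwise incidence identity, equation (\ref{eq:cond2})) transfers to it. Step 4 is likewise a pair of hopes, not arguments: the coincidence of the invariant measures $\mu_{u,v}$ is an assumption of essentially the same strength as the conjecture, and the ``converse to Poncelet'' step is not formulated precisely enough to assess.

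What you did correctly identify is the one piece of the hypothesis that can be exploited exactly: the $\rho=1/2$ case, i.e.\ that $F_{u,v}$ must be an involution whenever it interchanges two points. This is precisely the ingredient the paper uses, but globally and geometrically rather than infinitesimally: for $u$ arbitrary and $v$ the direction of the affine diameter joining the tangency points of the support lines parallel to $u$, the map $F_{u,v}$ swaps those two points, hence is an involution; this makes the conjugacy of directions symmetric, produces a family of inscribed parallelograms, and --- \emph{under the additional hypothesis that $\g$ is centrally symmetric} --- forces the parallelograms to be centered at the center, so that $\g$ has an affine axis of symmetry in every direction and is an ellipse by the John--Loewner argument of Berger. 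So the honest assessment is: your Step 2 observation suffices for the partial result the paper actually proves (the centrally symmetric case), but the passage from there to the full conjecture is missing in your plan just as it is missing in the literature.
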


As far as we know, this conjecture is open. Let us consider its projective version, where a family of parallel lines is replaced by a pencil of lines passing through a fixed point.

Let $P$  be a point in the exterior of $\g$. The intersections of $\g$ with the lines through $P$ define an involution $f_P:\g\to\g$ (if $P$ is a point at infinity, we are back to the previous situation). For two points, $P$ and $Q$, outside of $\g$, let $F_{P,Q}=f_P\circ f_Q$. Assume that the line $PQ$ is disjoint from $\g$.

If $\g$ is an ellipse, then $F_{P,Q}$ is conjugated to a rotation. Indeed, a projective transformation that takes the line $PQ$ to the line at infinity takes $\g$ to an ellipse (because $PQ$ does not intersect $\g$) and conjugates  $F_{P,Q}$ with $F_{u,v}$, which is the case discussed above.

\begin{theorem} \label{thm:PQ}
If $F_{P,Q}:\g\to\g$ is conjugated to a rotation for all pairs of points $P,Q$ such that $PQ\cap \g=\emptyset$, then $\g$ is an ellipse.
\end{theorem}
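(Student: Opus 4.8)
The plan is to convert the global dynamical hypothesis into a projective incidence property of $\g$, and then to settle that property by a local Taylor computation, in the spirit of the proof of Theorem~\ref{thm:symm}.

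\emph{Step 1: from rotations to a closure condition.} An orientation-preserving circle homeomorphism conjugate to a rotation which has a $2$-periodic point must be an involution: the conjugating rotation $R_\rho$ then satisfies $R_{2\rho}=\mathrm{id}$, so $\rho\in\{0,1/2\}$ and $F^2=\mathrm{id}$. Now let $x_0x_1x_2x_3$ be a quadrilateral inscribed in $\g$ with vertices in cyclic order, put $Q=x_0x_1\cap x_2x_3$ and $P=x_1x_2\cap x_3x_0$. Since opposite sides of a convex inscribed quadrilateral do not meet inside $\g$, the points $P,Q$ are exterior to $\g$; and the line $PQ$ is the polar of the interior point $x_0x_2\cap x_1x_3$ with respect to a conic, hence disjoint from that conic, hence — by continuity — disjoint from $\g$ for all sufficiently small quadrilaterals near a fixed point of $\g$. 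Because $f_Q(x_0)=x_1$, $f_P(x_1)=x_2$, $f_Q(x_2)=x_3$, $f_P(x_3)=x_0$, the map $F_{P,Q}=f_P\circ f_Q$ sends $x_0\mapsto x_2\mapsto x_0$, so $x_0$ is a $2$-periodic point; the hypothesis then forces $F_{P,Q}^2=\mathrm{id}$, i.e. $f_P$ and $f_Q$ commute. Thus $\g$ has the property $(\star)$: for every inscribed quadrilateral, the two involutions attached to its pairs of opposite sides commute. (Equivalently, the quadrilateral ``Poncelet porism'' holds for every point-pair $(P,Q)$ arising in this way.) For a conic, $(\star)$ is the classical fact that the diagonal triangle of an inscribed quadrangle is self-polar; the content of the theorem is the converse.

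\emph{Step 2: localization and normalization.} Since an oval all of whose arcs are conics is an ellipse, it suffices to prove that $\g$ has constant affine curvature near an arbitrary point. Fix $t$ and set up coordinates exactly as in the proof of Theorem~\ref{thm:symm}: take the affine parameterization with $\det(\g',\g'')=1$, rescale so that $k(t)=1$ (the parabolic case $k(t)=0$ being handled by a limiting argument), and choose coordinates with $\g(t)=(0,0)$, $\g'(t)=(1,0)$, $\g''(t)=(0,1)$, so that the higher jet of $\g$ at $t$ is governed by $p=k'(t)$, $q=k''(t)$, and so on. The goal is to show $p=0$.

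\emph{Step 3: the computation.} Take an inscribed quadrilateral with vertices $\g(t+a_i\e)$, $i=1,\dots,4$, depending on one or two shape parameters and a small scale parameter $\e$; lift the points to $\R^3$ and compute $P$, $Q$, and $(f_P\circ f_Q)^2$ applied to a test point of $\g$, all as power series in $\e$, using cross-products for the incidences exactly as in Section~\ref{sect:dual}. Condition $(\star)$ says this composition is the identity; expanding in $\e$ and collecting the first nontrivial coefficient should yield — as the Mathematica computation would confirm — an expression that is a nonzero multiple of $p$ and must vanish identically in the shape parameters. Hence $p=0$, so $k$ is constant near $t$; since $t$ is arbitrary, $\g$ is a conic, and being an oval it is an ellipse.

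\emph{Main obstacle.} The crux is Step~3: one must verify that $(\star)$, restricted to the family of quadrilaterals one actually computes with, is genuinely obstructed at the order to which one expands — that is, that the extracted $\e$-coefficient really is a nonzero multiple of $p$ and not an identity satisfied by every jet. It may be necessary to use a true two-parameter family of quadrilateral shapes, or to push the expansion to higher order, and throughout one must keep track of the constraint $PQ\cap\g=\emptyset$ so that the hypothesis legitimately applies. It is precisely here that the projective, rather than merely affine, strength of the hypothesis enters: the points $P,Q$ are finite and vary with the quadrilateral, whereas the affine specialization of $(\star)$ would constrain only inscribed parallelograms — of which a generic oval has far too few — which is why this method does not also settle Conjecture~\ref{conj1}.
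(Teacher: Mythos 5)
Your Step 1 is correct and parallels the paper's opening move: a circle map conjugate to a rotation that has a $2$-periodic orbit must be an involution, so the hypothesis yields a closure/commutation property for suitable pairs $(P,Q)$. But from there your argument has a genuine gap: the decisive step is never carried out. You reduce the theorem to your property $(\star)$ for inscribed quadrilaterals and then propose a fresh jet computation (Step 3) whose outcome you only conjecture -- you yourself flag that the extracted $\e$-coefficient might be an identity satisfied by every jet, in which case the whole scheme collapses or must be pushed to an unspecified higher order with more shape parameters. A proof plan whose core rigidity statement (``$(\star)$ forces $k'\equiv 0$'') is left unverified is not a proof. The paper avoids any new computation by choosing the configuration differently: take $x,y\in\g$, let $P$ be the intersection of the \emph{tangent lines} at $x$ and $y$, and let $Q$ be a point of the line $xy$ outside $\g$. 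Then $f_P$ fixes $x,y$ while $f_Q$ swaps them, so $F_{P,Q}$ has a $2$-periodic orbit and is an involution; the identity $f_Q\circ f_P\circ f_Q=f_P$ then says precisely that the tangent-chord ``polar duality'' of $\g$ preserves incidence, and Theorem \ref{thm:symm} of Section \ref{sect:dual} (already proved, via the Mathematica expansion) finishes the argument. Your configuration uses only secant lines, so it does not feed into that theorem, which is why you are forced into an unperformed computation; had you inserted the tangent lines (e.g., as a degenerate limit of your quadrilateral with $x_1\to x_2$, $x_3\to x_0$), you could have quoted the existing result instead.

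A secondary but real problem is your justification that $PQ\cap\g=\emptyset$ for your quadrilateral configuration: the polar-of-an-interior-point argument presupposes that $\g$ is a conic, which is the conclusion, and the continuity patch is delicate because in the small-quadrilateral limit the line $PQ$ tends to a tangent line of $\g$, so disjointness near the limit needs a quantitative (osculating-conic, correct-order) estimate, not just continuity. In the paper's configuration this issue disappears by elementary convexity: every line through the tangent-intersection point $P$ that meets $\g$ must cross the open chord $(x,y)$, while the line $PQ$ meets the line $xy$ only at the exterior point $Q$, so $PQ$ misses $\g$ and the hypothesis legitimately applies.
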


\begin{proof}
Consider the left part of Figure \ref{invol}. One has 
$$
f_P(x)=x,\ f_P(y)=y,\ f_Q(x)=y,\ f_Q(y)=x,
$$
hence $F_{P,Q} (x)=y, F_{P,Q}(y)=x$. Since $F_{P,Q}$ is conjugated to a rotation, $F_{P,Q}$ is an involution:
$$
f_P\circ f_Q\circ f_P\circ f_Q=Id\ \ {\rm or}\ \ f_Q\circ f_P\circ f_Q=f_P.
$$
This implies that points $P,w,z$ on the right part of Figure \ref{invol} are collinear.

\begin{figure}[ht]
\centering
\includegraphics[width=.45\textwidth]{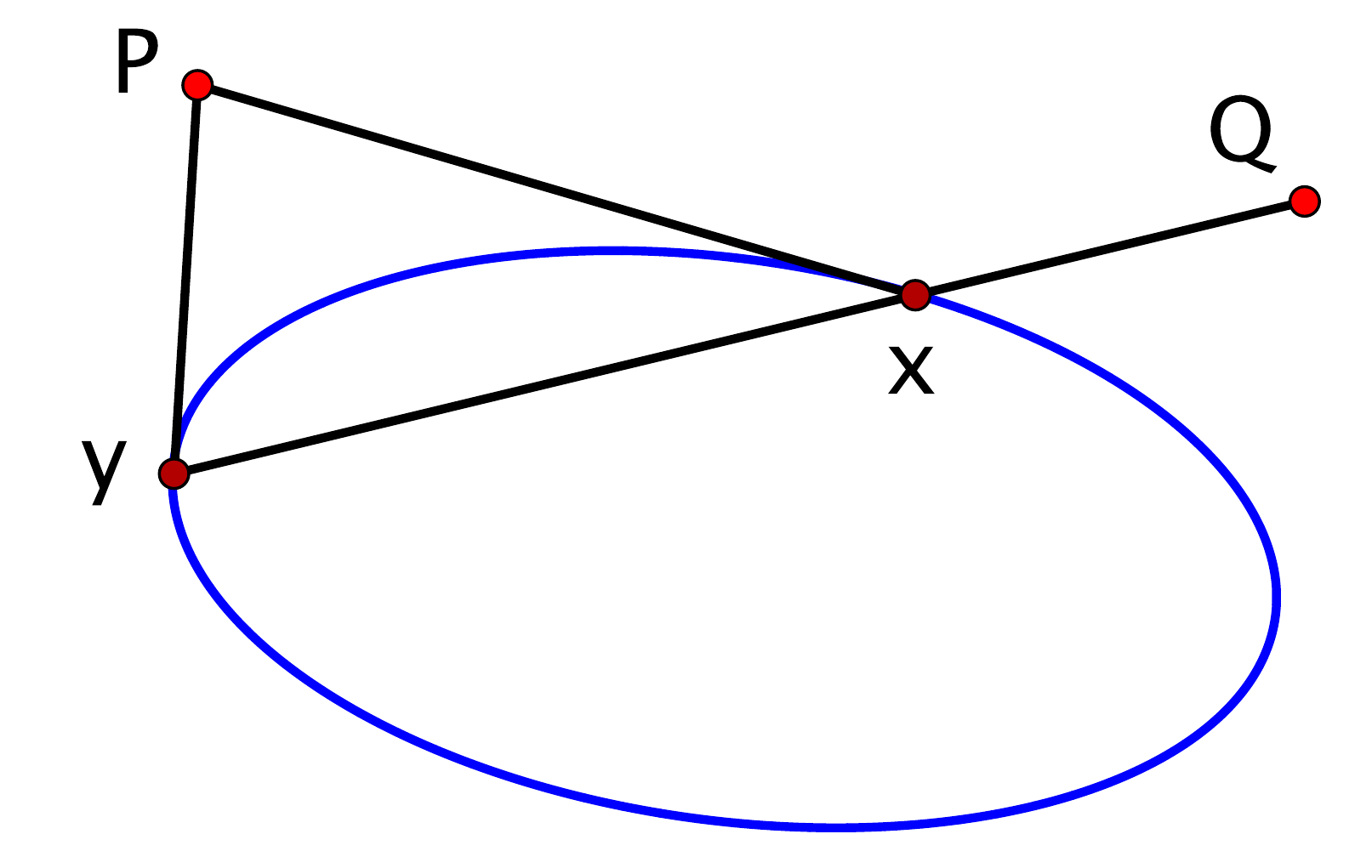}\qquad
\includegraphics[width=.45\textwidth]{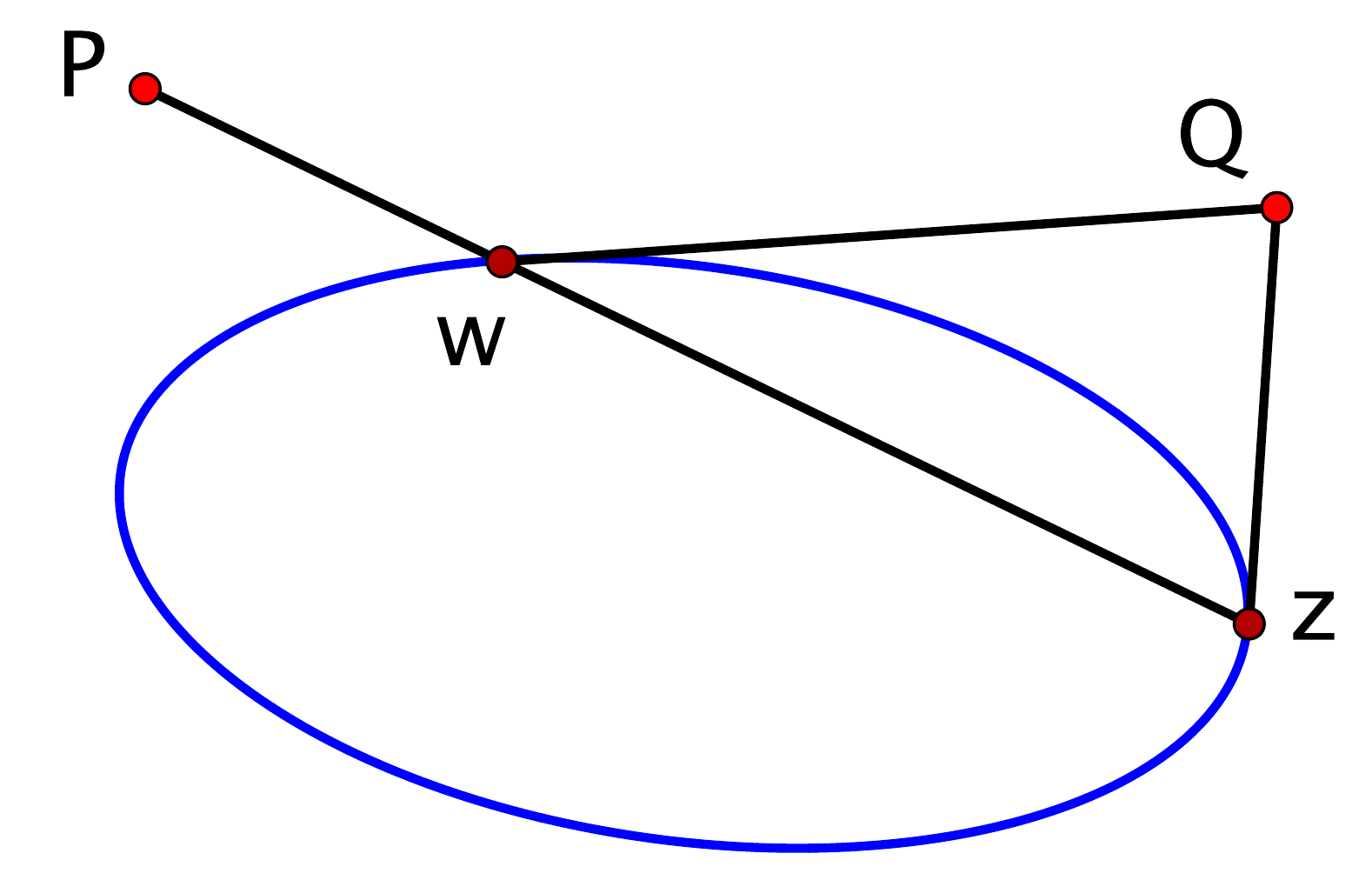}
\caption{Establishing the the incidence preservation property of the polarity associated with an oval.}	
\label{invol}
\end{figure}

It follows that the curve $\g$ has the property that the associated ``polar duality" preserves the incidence relation, and Theorem \ref{thm:symm} implies  that $\g$ is an ellipse.
\end{proof}

If $\g$ is an ellipse, then its interior can be considered as the hyperbolic plane in the projective,  Cayley-Klein, model, and $\g$ is the circle at infinity. Then the map $f_P$ is the restriction to $\g$ of the reflection of the hyperbolic plane in the line polar dual to $P$, and likewise for $f_Q$. If the line $PQ$ is disjoint from $\g$, then the two axes of reflection intersect in $H^2$, and $F_{P,Q}$ is a rotation about this intersection point (through the angle twice that between the axes).

But if the line $PQ$ intersects the ellipse, then the axes of reflection are disjoint in $H^2$, and the
two  points $PQ\cap\g$ are the fixed points of the map $F_{P,Q}$. 
The circle at infinity is identified with $\RP^1$, and the restriction of an orientation preserving isometry to it is a M\"obius (projective) transformation that, in this case, has two fixed points. In the limiting case of a tangent line, $F_{P,Q}$ has a unique fixed point, the tangency point.

Likewise, if points $P$ and $Q$ lie in the interior of $\g$, then the composition of the reflections of $H^2$ in these points is  a hyperbolic orientation preserving isometry of the hyperbolic plane. Its restriction to the circle at infinity again is a M\"obius transfrmation $F_{P,Q}$, and it has two fixed points, the intersection points of the line $PQ$ with $\g$. 
This prompts another conjecture.

\begin{conjecture} \label{conj:int}
Let points $P$ and $Q$ both lie in the exterior of the oval $g$ so that the line $PQ$ intersects it, or both lie in the interior of $\g$. Assume that for 
all such pairs of points the map $F_{P,Q}:\g\to\g$ is $C^\infty$-conjugated to a M\"obius transformation. Then $\g$ is an ellipse.
\end{conjecture}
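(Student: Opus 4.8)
The plan is to establish, for the oval $\g$ of Conjecture \ref{conj:int}, the incidence-preservation property of the associated ``polar duality'' and then quote Theorem \ref{thm:symm}, exactly as in the proof of Theorem \ref{thm:PQ}; the point where a new idea is needed is that the shortcut used there (``a circle homeomorphism conjugate to a rotation that has a periodic orbit is an involution'') has no analogue, so one must exploit the full force of ``$C^\infty$-conjugate to a M\"obius transformation.'' First note what the hypothesis gives at no cost. Under either alternative of the conjecture the line $PQ$ meets $\g$; generically $PQ\cap\g=\{A,B\}$ with $A\neq B$, and since the chord of $\g$ through $Q$ and $A$ (resp.\ through $P$ and $B$) is the line $PQ$ itself, $f_Q(A)=B$ and $f_P(B)=A$, so $A,B$ are fixed by $F_{P,Q}$; moreover $f_P(A)=B$, and the relations $f_P^2=f_Q^2=\mathrm{id}$ give $f_P'(A)f_P'(B)=f_Q'(A)f_Q'(B)=1$, hence $F_{P,Q}'(A)\,F_{P,Q}'(B)=1$. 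Thus the M\"obius transformation to which $F_{P,Q}$ is conjugate is hyperbolic, with fixed points $A,B$ and reciprocal multipliers (if $PQ$ is tangent to $\g$ at $T$, which occurs in the first alternative, one gets a parabolic M\"obius transformation with fixed point $T$; the identity is the remaining degenerate case, and $\geq 3$ fixed points is excluded by the hypothesis).

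Next I would make the hypothesis pointwise. A $C^\infty$ circle diffeomorphism $F$ with two hyperbolic fixed points and reciprocal multipliers is $C^\infty$-linearizable at each of them (Sternberg), and on each of the two arcs $\alpha,\beta$ joining the fixed points the linearizing coordinate at one endpoint extends uniquely, up to a scalar, to a $C^\infty$ conjugacy of $F$ with $z\mapsto\lambda z$ on a half-line; gluing the two one-sided conjugacies into a single smooth diffeomorphism $\g\to\RP^1$ costs exactly one real condition, namely that a ``transition constant'' $\rho_\alpha$ comparing the normalized linearizations at the two endpoints along $\alpha$ equals the corresponding constant $\rho_\beta$ along $\beta$. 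So the hypothesis becomes: $\rho_\alpha(F_{P,Q})=\rho_\beta(F_{P,Q})$ for every admissible pair. In the tangent (parabolic) limit, where $F_{P,Q}$ has the single fixed point $T$, a \emph{local} consequence is available instead: the formal residue at a parabolic fixed point is a conjugacy invariant, so $C^\infty$-conjugacy with a parabolic M\"obius map forces the germ of $F_{P,Q}$ at $T$ to be tangent to the identity to order exactly two and its formal residue to equal the value realized by an ellipse — a condition on the $3$-jet of $F_{P,Q}$ at $T$, hence on the low-order jet of $\g$ at $T$.

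To convert this into a statement about $\g$ I would run the machinery of the proof of Theorem \ref{thm:symm}. Give $\g$ the affine parameterization $\g(t)$, fix $t$, rescale so that the affine curvature satisfies $k(t)=1$, and Taylor-expand $\g$ near $\g(t)$ with unknowns $p=k'(t),\,q=k''(t),\dots$; as there, it suffices to show $p=0$. Choose a one-parameter family of admissible configurations $(P(\e),Q(\e))$ degenerating as $\e\to0$ so that the chord $PQ$ shrinks to the tangent line at $\g(t)$; using the lift to $\R^3$ and the cross-product formalism of that proof, compute the $3$-jet of $F_{P(\e),Q(\e)}$ at its fixed point, read off the formal residue, and expand the equation ``residue $=$ ellipse value'' in powers of $\e$. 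An ellipse satisfies it identically, so the first nonzero coefficient is a universal polynomial in $p,q,\dots$, which I expect to be a nonzero multiple of $p$; then $p=0$, the affine curvature is constant, and $\g$ is a conic by \cite{Si}, hence an ellipse by convexity. (Alternatively, one might try to extract from $\rho_\alpha=\rho_\beta$, in a suitable degeneration, the collinearity of the points $P,w,z$ in Figure \ref{invol} and then invoke Theorem \ref{thm:symm} verbatim.)

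The main obstacle is the nonlocality of the hyperbolic case: $\rho_\alpha$ is not a finite jet of $F_{P,Q}$ at its fixed points but depends on the global behavior of $F_{P,Q}$ along the whole arc $\alpha$, so an $\e$-expansion of $\rho_\alpha-\rho_\beta$ must control $F_{P,Q}$ on all of $\g$, not merely near $A$ and $B$; the degeneration has to be arranged so that the nonlocal contributions are computable, or cancel between the two arcs to the relevant order. This is why the parabolic route is attractive — but it may turn out that the parabolic residue is ``universal'' (the same for every oval for soft reasons), in which case the nonlocal hyperbolic information cannot be avoided. A related difficulty is that, unlike in Theorem \ref{thm:PQ}, the two alternatives of the conjecture are genuinely different and neither supplies a periodic orbit of $F_{P,Q}$: for four points of $\g$ forming an inscribed complete quadrangle exactly one of the three diagonal points lies inside $\g$, so $P$ and $Q$ can never be realized as two diagonal points of one quadrangle while both remaining on the same side of $\g$. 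Finally, one must check that the finite-order consequences of the $C^\infty$ hypothesis used above really suffice, or else feed in smoothness in an essential way.
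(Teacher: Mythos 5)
You are addressing Conjecture \ref{conj:int}, which the paper explicitly leaves open: there is no proof in the paper to compare against, and your text is, by its own account, a research programme rather than a proof. The decisive step --- computing the relevant conjugacy invariant of $F_{P,Q}$ in a degenerating family and showing that the first nonzero coefficient of its expansion in $\e$ is a nonzero multiple of $p=k'(t)$ --- is only ``expected,'' and the obstacles you list at the end (nonlocality of the hyperbolic invariant, possible universality of the parabolic one, absence of a periodic orbit) are exactly the points where the argument would have to be supplied. So the proposal cannot be accepted as a proof of the statement.

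Beyond incompleteness, two steps are concretely wrong or likely vacuous. First, the reduction of ``$C^\infty$-conjugate to a M\"obius transformation'' to a single scalar equation $\rho_\alpha=\rho_\beta$ does not hold: Sternberg gives a linearizer near each hyperbolic fixed point, and extending it along an arc by the dynamics produces a coordinate smooth on the open arc, but smoothness at the far endpoint is precisely what fails in general, so your ``transition constant'' is not well defined. As the paper's remark (following the conjecture, after Lyubich) explains, the obstruction is the transition \emph{map} between the two linearizations --- a functional modulus --- so the hypothesis is a condition of infinite codimension, and a finite-jet computation in the style of Section \ref{sect:dual} cannot capture it without a genuinely new idea. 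Second, the parabolic shortcut is probably empty: when $PQ$ is tangent to $\g$ at $T$, the map $F_{P,Q}=f_P\circ f_Q$ is a composition of two smooth involutions fixing $T$, hence $f_Q\circ F_{P,Q}\circ f_Q=F_{P,Q}^{-1}$, and this reversibility already forces the formal (residue-type) invariants at $T$ to take their M\"obius values for \emph{every} oval; expanding ``residue $=$ ellipse value'' in $\e$ should then give $0=0$ and no constraint on $p$ --- the very worry you voice, but without a working fallback, since your alternative of recovering the collinearity of $P,w,z$ in Figure \ref{invol} requires the configuration of Theorem \ref{thm:PQ}, a line $PQ$ disjoint from $\g$, which the hypothesis of Conjecture \ref{conj:int} excludes. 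The absence of the $2$-periodic orbit that makes $F_{P,Q}$ an involution in Theorem \ref{thm:PQ} is exactly why the paper states this case as a conjecture; any proof must extract usable information from the global, not finite-jet, content of the smooth-conjugacy hypothesis, and your proposal does not yet do that.
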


In the spirit of Conjecture \ref{conj1}, perhaps it is enough for the same conclusion to assume that the points $P$ and $Q$ are confined to one line that intersects the curve $\g$.

\begin{remark}
{\rm The derivatives of a M\"obius transformation at its two fixed points are reciprocal. Being a composition of two inversions, the map $F_{P,Q}$ automatically shares this property. 

A diffeomorphism of $\RP^1$ that has exactly two fixed points is topologically conjugated to a M\"obius transformation, but there exists a functional obstruction to a smooth conjugacy. 

Assume, without loss of generality, that the fixed points of a diffeomorphism $F$ are zero and infinity, and the derivatives at these points are equal and distinct from to $1$. Then $F$ can be uniquely (up to a constant factor)  linearized in $\RP^1-\{0\}$ and in $\RP^1-\{\infty\}$, and the transition map between these two linear coordinates is an obstruction. 

I am grateful to M. Lyubich for explaining this construction to me.
}
\end{remark}

%Note that the values of the derivatives of a M\"obius transformation at its two fixed points are reciprocal to each other.

As to  Conjecture \ref{conj1},   it holds under the additional assumption that the curve is centrally symmetric. The idea of the proof is contained in \cite{GKT}; we reproduce it here for completeness.

\begin{theorem} \label{thm:symm}
Let $\g$ be a centrally symmetric oval. If $F_{u,v}:\g\to\g$ is conjugated to a rotation for all pairs of directions $u,v$, then $\g$  is an ellipse.
\end{theorem}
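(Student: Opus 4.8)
\medskip
\noindent\emph{Sketch of a possible proof.}
The central symmetry should be used at the outset. Place the centre of $\g$ at the origin and let $\sigma\colon x\mapsto -x$ be the antipodal involution. For a fixed direction $u$, the family of chords of $\g$ parallel to $u$ is invariant under $\sigma$ — it carries the $u$-chord through $x$ to the $u$-chord through $-x$, respecting the pairing of endpoints — so $\sigma$ commutes with each involution $f_u$, and therefore with every $F_{u,v}=f_u\circ f_v$. Setting $g_u:=\sigma\circ f_u$, one gets an involution of $\g$ with exactly two fixed points, namely the endpoints of the chord of $\g$ through the centre in direction $u$, and $F_{u,v}=g_u\circ g_v$. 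The hypothesis thus says that the composition of any two of these ``reflections'' is conjugate to a rotation, with the ``axes'' being the diameters of $\g$; for an ellipse the $g_u$ are genuine affine reflections in the diameters of $\g$, so the assertion is that this elliptic picture is forced.

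\noindent A first, elementary batch of consequences comes from the instances in which $F_{u,v}$ happens to interchange two points of $\g$: being conjugate to a rotation, such an $F_{u,v}$ must itself be an involution. By central symmetry this happens exactly when $v$ is the direction of the chord joining the two tangency points of the lines parallel to $u$ (and symmetrically in $u$ and $v$), and the resulting relations $f_v\circ f_u\circ f_v=f_u$, unwound as in the proof of Theorem~\ref{thm:PQ} by applying them to a fixed point of $f_v$, say that the map sending a direction $u$ to the tangent direction of $\g$ at an endpoint of the $u$-diameter is an \emph{involution}. In other words, Birkhoff orthogonality on $\g$ is symmetric, i.e.\ $\g$ is a Radon curve. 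This is a real restriction, but insufficient by itself, since there are non-elliptical Radon curves.

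\noindent Consequently the conjugacy to a rotation must be exploited quantitatively, for the \emph{non}-involutive $F_{u,v}$ as well, and I would do this by the method of Section~\ref{sect:dual}: give $\g$ the affine parameterisation near a point $\g(t)$, so that the only local invariant is the affine curvature $k(t)$, and expand $F_{u,v}$ and its iterates in powers of a small parameter $\e$ governing the sizes of the chords involved. The central symmetry should make the obstructing (odd) terms in these expansions vanish and, together with the Radon relation, leave essentially one nontrivial coefficient, a derivative of $k$ at $t$, which must vanish because the relevant iterate of $F_{u,v}$ is forced to be the identity — that being the content of ``conjugate to a rotation'' in the case (which one arranges to occur in this infinitesimal regime) where the rotation number is rational, exactly as the involution property was used in Section~\ref{sect:dual}. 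Since $t$ is arbitrary, $k$ is constant, $\g$ is a conic, and a bounded centrally symmetric conic is an ellipse. The step I expect to be hardest is this last one: identifying the correct finite-order identity and arranging the expansion so that the central symmetry reduces the free coefficients enough to eliminate the Radon competitors — that is, extracting information genuinely beyond the first-order (Radon) level, which is precisely where the hypothesis of central symmetry is indispensable.
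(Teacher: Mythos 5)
Your first two paragraphs start along the right lines but then under-exploit exactly the relation you found. From the involutive instances (those $F_{u,v}$ with $v$ the direction of the affine diameter conjugate to $u$) you only evaluate $f_v\circ f_u\circ f_v=f_u$ at the fixed points of $f_v$, which indeed yields no more than symmetry of the conjugacy relation (the Radon property), and you then correctly observe that this alone is inconclusive. But the identity holds at \emph{every} point of $\g$, i.e.\ $f_u$ and $f_v$ commute, and this is where the theorem is actually won: for any $x\in\g$ the four points $x,\,f_v(x),\,f_uf_v(x)=f_vf_u(x),\,f_u(x)$ form an inscribed quadrilateral whose sides are alternately parallel to $v$ and $u$, hence an inscribed parallelogram. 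Now central symmetry enters: the parallelogram reflected in the center of $\g$ is again inscribed, and strict convexity forces the two to coincide, so every such parallelogram is centered at the center of $\g$. Consequently the midpoints of \emph{all} chords of $\g$ parallel to $u$ lie on the central line in the conjugate direction $v$; that is, $\g$ has an affine line of symmetry in every direction, and a classical characterization (Berger, via the John--Loewner ellipse) then gives that $\g$ is an ellipse. This is the paper's proof; it is entirely synthetic and never needs the non-involutive maps $F_{u,v}$, nor the affine-curvature computation of Section~\ref{sect:dual}.

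Your proposed continuation in the third paragraph is where the genuine gap lies. It is only a plan, and its key steps are unsubstantiated: ``conjugate to a rotation'' yields a finite-order identity $F_{u,v}^q=\mathrm{Id}$ only at parameter values where the rotation number is rational, and such an identity is \emph{global} --- the orbit winds around the whole oval --- so it cannot be fed into a Taylor expansion of the affine parameterization near a single point $\g(t)$ the way the involution of Section~\ref{sect:dual} (a local statement about nearby chords) could. You give no mechanism for ``arranging'' the rational case in an infinitesimal regime, nor any computation showing that central symmetry plus the Radon relation reduces the expansion to a single coefficient $k'(t)$; as you yourself note, that is the hard step, and without it the argument is essentially an unexecuted attack on the open Conjecture~\ref{conj1} with an extra symmetry hypothesis. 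The missing idea is not more computation but the observation that the involutive instances already encode the commutation $f_uf_v=f_vf_u$, which together with central symmetry forces straight midpoint loci and hence affine reflection symmetry in every direction.
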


\begin{proof}
We start in the same way as in the proof of Theorem \ref{thm:PQ}, but with the points $P$ and $Q$ lying at infinity, so that the two respective pencils of lines comprise two parallel families. 

Namely, let $u$ be a direction. Consider the two support lines to $\g$ parallel to $u$, and let $v$ be the direction of the segment that connects  the tangency points of these lines with $\g$ (this segment is the {\it affine diameter} of $\g$ in direction $v$). The argument from the proof of Theorem \ref{thm:PQ} shows that the map $F_{u,v}$ is an involution. 

The directions $u$ and $v$ are called conjugate. The fact that $F_{u,v}$ is an involution implies that  this conjugacy relation is symmetric.

It follows that $\g$ admits a family of inscribed parallelograms whose sides have directions $u$ and $v$, and these parallelograms interpolate between the affine diameters in directions $u$ and $v$, see the left part of see Figure \ref{coin}.

Let the center of symmetry of $\g$ be the origin. We claim that the parallelograms are origin-centered. Indeed, given an inscribed parallelogram, the parallelogram that is origin symmetric to it  is also inscribed in $\g$. The strict convexity of $\g$ implies that these two parallelograms coincide, see the right part of Figure \ref{coin}. Hence the parallelograms are centered at the origin.

\begin{figure}[ht]
\centering
\includegraphics[width=.4\textwidth]{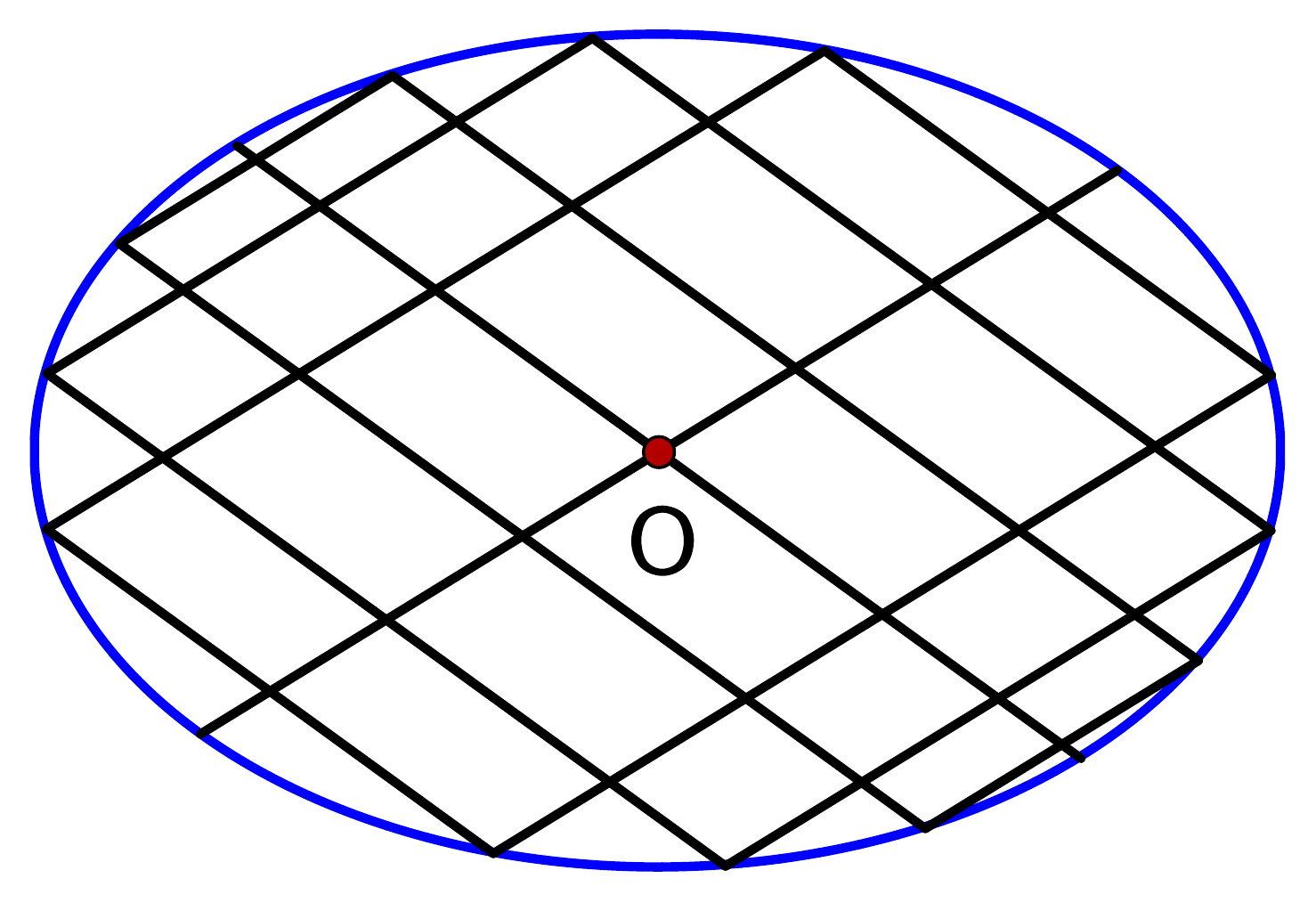} \quad\quad
\includegraphics[width=.3\textwidth]{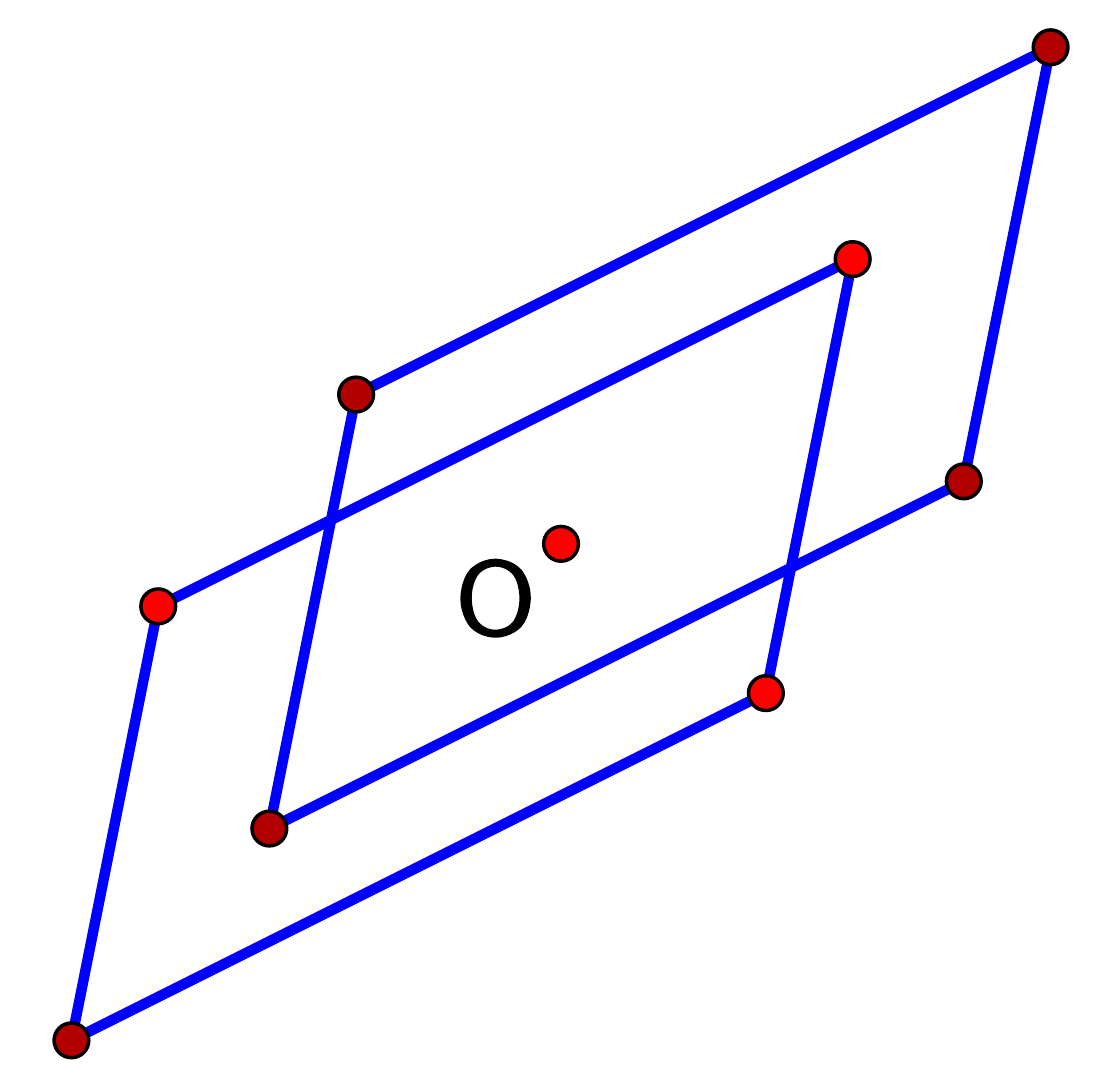}
\caption{Left: a family of inscribed parallelograms whose sides have conjugate directions. Right:
if two centrally symmetric parallelograms do not coincide, their vertices cannot all lie on a strictly convex curve.}	
\label{coin}
\end{figure}

It follows that the midpoints of the sides of the parallelograms lie on the affine diameter having the conjugate direction to the direction of the sides. Since $u$ was arbitrary, $\g$ admits an affine line of symmetry in every direction. This property is characteristic of ellipses -- see \cite{Be} for a proof using the John-Loewner ellipse -- and this completes the proof of the theorem.
\end{proof}
\medskip

{\bf Acknowledgements}. Many thanks to %P. Albers and 
M. Lyubich for useful discussions.
%Many thanks to P. Albers, M. Bialy, M. Lyubich, R. Schwartz, and S. Troubetzkoy for their interest and comments. 
The author was supported by NSF grant DMS-2005444.


\begin{thebibliography}{99}

\bibitem{Ar} V. Arnold. {\it From Hilbert’s superposition problem to dynamical systems},1–18. The Arnoldfest. Amer. Math. Soc., Providence, RI, 1999.

\bibitem{Be} M. Berger. {\it Convexity.} Amer. Math. Monthly {\bf 97} (1990), 650--678.

\bibitem{GKT} D. Genin, B. Khesin, S. Tabachnikov. {\it Geodesics on an ellipsoid in Minkowski space. }
Enseign. Math. {\bf 53} (2007), 307--331. 

\bibitem{HM} C. Hanusa, A. Mahankali.  {\it A billiards-like dynamical system for attacking chess pieces.}
 European J. Combin. {\bf 95} (2021), Paper No. 103341, 26 pp.

\bibitem{Jo} F. John. {\it The Dirichlet Problem for a hyperbolic equation}. American J. Math. {\bf 63} (1941), 141--154.

\bibitem{KT} B. Khesin, S. Tabachnikov. {\it Pseudo-Riemannian geodesics and billiards}.
Advances Math. {\bf 221} (2009), 1364--1396.

\bibitem{NT} A. Nogueira, S. Troubetzkoy. {\it Chess billiards}. arXiv:2007.14773.

\bibitem{Si} U. Simon. {\it Affine differential geometry.} Handbook of differential geometry, Vol. I, 905--961, North-Holland, Amsterdam, 2000. 

\bibitem{So} S. Sobolev. {\it On a new problem of mathematical physics.} Izvestia Akad. Nauk SSSR, Ser. Mat. {\bf 18} (1954), 3--50.
		
\end{thebibliography}
\end{document}